\documentclass{amsart}

\usepackage{amssymb,amscd,amsthm,amsxtra}
\usepackage{dsfont,latexsym}
\usepackage{url}

\usepackage{mathrsfs}
\renewcommand{\mathcal}{\mathscr}

%Hyperref
\usepackage{color}
\definecolor{citation}{rgb}{0.2,0.5,0.2}
\definecolor{formula}{rgb}{0.1,0.2,0.5}
\definecolor{url}{rgb}{0,0.2,0.7}
\usepackage[colorlinks=true,linkcolor=formula,urlcolor=url,citecolor=citation]{hyperref}

% ----------------------------------------------------------------
\vfuzz3pt % Don't report over-full v-boxes if over-edge is small
\hfuzz2pt % Don't report over-full h-boxes if over-edge is small
% ----------------------------------------------------------------

%Preliminary version
%\usepackage[scrtime]{prelim2e}

%Theorems et co.
\newtheorem{thm}{Theorem}[section]

\newtheorem{lemma}[thm]{Lemma}
\newtheorem{prop}[thm]{Proposition}
\theoremstyle{definition}

\theoremstyle{remark}
\newtheorem{rem}[thm]{Remark}
\numberwithin{equation}{section}

%Lettere calligrafiche
\newcommand{\CC}{\mathcal{C}}

\newcommand{\FF}{\mathscr{F}}

\newcommand{\EE}{\mathcal{E}}
\newcommand{\SF}{{\mathscr S}}

%Insiemi numerici
\newcommand{\R}{{\mathds R}}
\newcommand{\N}{{\mathds N}}

%Nuove \def
\def\dys{\displaystyle}
\def\rif{\eqref}
\def\eps{\varepsilon}

%%command setlinespacing - def
\newlength{\defbaselineskip}
\setlength{\defbaselineskip}{\baselineskip}
\newcommand{\setlinespacing}[1]
           {\setlength{\baselineskip}{#1 \defbaselineskip}}

\title
[Existence and symmetry results for a Schr\"odinger type problem]
{Existence and symmetry results \\ for a Schr\"odinger type problem \\ involving the fractional Laplacian}

\author[S. Dipierro]{Serena Dipierro}
\address[Serena Dipierro]{SISSA, Sector of Mathematical Analysis
\\ Via Bonomea, 265 
\\ 34136 Trieste, Italy}
\email{\href{mailto:dipierro@sissa.it}{dipierro@sissa.it}}

\author[G. Palatucci]{Giampiero Palatucci}
\address[Giampiero Palatucci]{Dipartimento di Matematica
\\ Universit\`a degli Studi di Parma
\\ Campus - Parco Area delle Scienze, 53/A
\\ 43124 Parma, Italy}
 \email{\href{mailto:giampiero.palatucci@unipr.it}{giampiero.palatucci@unipr.it}}

\author[E. Valdinoci]{Enrico Valdinoci}
\address[Enrico Valdinoci]{Dipartimento di Matematica
\\ Universit\`a degli Studi di Milano
\\ Via Saldini, 50
\\ 20133 Milano, Italy}
\email{\href{mailto:enrico@math.utexas.edu}{enrico@math.utexas.edu}}

\begin{document}
\vskip .2truecm

\subjclass[2010]{Primary  35J60, 35B33; Seconday 35S30, 49J45\vspace{1mm}}

\keywords{Nonlinear problems, fractional Laplacian, fractional Sobolev spaces, critical Sobolev exponent, spherical solutions, ground states. \vspace{1mm}}

\thanks{{\it Acknowledgments}. The second author has been supported by
\href{http://prmat.math.unipr.it/~rivista/eventi/2010/ERC-VP/}{ERC grant 207573 ``Vectorial Problems''}. 
The third author has been supported by FIRB ``Project Analysis and Beyond'' and by ERC grant 277749 ``$\epsilon$ Elliptic Pde's and Symmetry of Interfaces and Layers for Odd Nonlinearities''.}

\begin{abstract}
\small{This paper deals with the following class of nonlocal Schr\"odinger equations
$$
\displaystyle
(-\Delta)^s u +  u =  |u|^{p-1}u \ \  \text{in} \ \R^N, \quad \text{for} \ s\in (0,1).
$$
We prove existence and symmetry results for the solutions $u$ in the fractional Sobolev space $H^s(\R^N)$. Our results are in clear accordance with those for the classical local counterpart, that is when $s=1$.
}
\end{abstract}

\setcounter{tocdepth}{2}

\maketitle
{\small \tableofcontents}

\setlinespacing{1.09}

\section{Introduction}

We consider the following problem
\begin{eqnarray}\label{pb_classic}
\dys
\begin{cases}
-\Delta u + \eta u = \lambda |u|^{p-1}u & \text{in} \ \R^N, \\[1ex]
u\in H^1(\R^N), \ u \not\equiv 0,
\end{cases}
\end{eqnarray}
where $\lambda$ and $\eta$ are fixed positive constants and $p>1$.

The equation in~\rif{pb_classic} has been widely studied in the last decades, since it is the basic version of some
fundamental models arising in various applications (e.~\!g., stationary states in nonlinear equations of Schr\"odinger type). One of the first contributions to the analysis of problem~\rif{pb_classic} was given by Pohozaev in~\cite{Poh65}, where he proved that there exists a solution~$u$ of~\rif{pb_classic} if and only if $1<p<2^*\!-\!1$, being $2^*=2N/(N-2)$ the so-called Sobolev critical exponent. In~\cite{Poh65} also a by-now classical ``identity'' appears, in order to prove that there are no solutions to~\rif{pb_classic} when $p$ is greater or equal than $2^*\!-1$.

Another important contribution to the analysis of 
problem~\rif{pb_classic} has been given in~\cite{BL83}
(see also \cite{BL83b}), in which the authors consider 
an extension of the equation in~\rif{pb_classic} by replacing the 
nonlinearity $-\eta u+\lambda|u|^{p-1}u$ by a wider class of odd 
continuous functions~$g=g(u)$ satisfying $g(0)=0$ and some superlinear 
and growth assumptions. Among other results, in~\cite{BL83} it has been 
shown the existence of a solution~$u$ to~\rif{pb_classic}, with some 
properties of symmetry and a precise decay at infinity. It is worth 
pointing out that the method to prove the existence of solutions 
to~\rif{pb_classic} relies on a variational approach \big(the {\it 
constrained minimization method}, see~\cite[Section 3]{BL83}\big), by 
working directly with the energy functional related to~\rif{pb_classic}.

\vspace{1mm}
A natural question could be whether or not this method can be adapted to deal 
with a nonlocal version of the problem above. In this respect, the aim of the present paper is to extend the existence and symmetry results in~\cite{BL83} for the nonlocal analog of problem~\rif{pb_classic} by replacing the standard Laplacian operator by the fractional Laplacian operator~$(-\Delta)^s$, where, as usual, for any $s\in(0,1)$, $(-\Delta)^s$ denotes the $s$-power of the Laplacian operator and, omitting a multiplicative constant $C=C(N,s)$, we have
\begin{equation}\label{def_fractio}
\dys (-\Delta)^su(x)\, =\, P.V.\int_{\R^N}\frac{u(x)-u(y)}{|x-y|^{N+2s}}\,dy \, = \, \lim_{\eps\to 0} \int_{\CC B_\eps(x)}\frac{u(x)-u(y)}{|x-y|^{N+2s}}\,dy.
\end{equation}
Here $B_\eps(x)$ denotes the $N$-dimensional ball of radius~$\eps$, centered at~$x\in\R^N$, $\CC$~denotes the complementary set,
and 
``$P.V.$'' is a commonly used abbreviation for ``in the principal value sense''. 
\vspace{1mm}

Recently, a great attention has been focused on the study of problems involving the fractional Laplacian,  from a pure mathematical point of view 
as well as from concrete applications, since this operator  naturally arises in many different contexts, such as, among the others, obstacle problems, financial market, phase transitions, anomalous diffusions, crystal dislocations, soft thin films, semipermeable membranes, flame propagations, conservation laws, ultra-relativistic limits of quantum mechanics, quasi-geostrophic flows, minimal surfaces, materials science, water waves, etc...
The literature is really too wide to attempt any reasonable comprehensive treatment in a single paper\footnote{For an elementary introduction to this topic and a wide, but still not fully comprehensive, list of related references, we refer to~\cite{DPV12}.}.
We would just cite some very recent papers which analyze fractional elliptic equations involving the critical Sobolev exponent, \cite{SV12, Tan12, FL10, BCD10, BCD11, PP11, SV11}.

\vspace{1mm}

Let us come back to the present paper. We will deal with the following problem
\begin{eqnarray}\label{problema}
\dys
\begin{cases}
(-\Delta)^s u +  u =  |u|^{p-1}u \ \  \text{in} \ \R^N, \\[1ex]  
u\in H^s(\R^N), \ u \not\equiv 0,
\end{cases}
\end{eqnarray}
where $H^s(\R^N)$ denotes the fractional Sobolev space; we immediately refer to Section~\ref{sec_fractio} for the definitions of the space $H^s(\R^N)$ 
and of variational solutions to~\rif{problema}.

Precisely, we are interested in existence and symmetry properties of the variational solutions~$u$ to~\rif{problema}, as stated in the following
\begin{thm}\label{teorema} 
Let $s\in (0,1)$ and $p \in (1,\, (N+2s)/(N-2s))$, with $N\geq 2$. There exists a solution $u\in H^s(\R^N)$ to problem~\rif{problema} which is positive and spherically symmetric. 
\end{thm}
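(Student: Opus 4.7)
The plan is to follow the constrained minimization strategy of Berestycki-Lions \cite{BL83}, suitably adapted to the nonlocal setting. Set $g(t):=-t+|t|^{p-1}t$ and $G(t):=\int_0^t g\,d\tau=-\tfrac12 t^2+\tfrac{1}{p+1}|t|^{p+1}$, and introduce the constraint manifold
\begin{equation*}
\mathcal{M}\,:=\,\Big\{u\in H^s(\R^N)\,:\,\int_{\R^N}G(u)\,dx=1\Big\}.
\end{equation*}
On $\mathcal{M}$ I would minimize the Gagliardo seminorm
\begin{equation*}
[u]_s^{\,2}\,:=\,\iint_{\R^{2N}}\frac{|u(x)-u(y)|^2}{|x-y|^{N+2s}}\,dx\,dy.
\end{equation*}
A suitably dilated, compactly supported test function shows that $\mathcal{M}\neq\emptyset$ exactly in the subcritical range $p<(N+2s)/(N-2s)$, and that the resulting infimum $I$ is positive. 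By the fractional Pólya-Szeg\H{o} inequality $[u^*]_s\leq [u]_s$ (Almgren-Lieb, Frank-Seiringer), together with the equimeasurability identity $\int_{\R^N}G(u^*)\,dx=\int_{\R^N}G(u)\,dx$, it suffices to minimize within the subspace $H^s_{\mathrm{rad}}(\R^N)$ of radially symmetric, nonnegative, radially decreasing functions.

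The main obstacle is the compactness of minimizing sequences, due to the absence of an underlying bounded domain. A radial minimizing sequence $\{u_n\}$ is bounded in $H^s(\R^N)$: its seminorm is controlled by construction, while the constraint, the subcritical Sobolev embedding $H^s\hookrightarrow L^{p+1}$, and an interpolation between $L^2$ and $L^{2N/(N-2s)}$ bound $\|u_n\|_{L^2}$. After extracting $u_n\rightharpoonup u$ weakly in $H^s$, the fractional analogue of the Strauss radial compactness theorem, namely $H^s_{\mathrm{rad}}(\R^N)\Subset L^q(\R^N)$ for $q\in(2,2N/(N-2s))$, upgrades the convergence to $u_n\to u$ strongly in $L^{p+1}$. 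Weak lower semicontinuity of $\|\cdot\|_{L^2}$ and of $[\,\cdot\,]_s$, combined with the strong $L^{p+1}$ convergence, then yields $\int G(u)\,dx\geq 1$; the dilation $u_\tau(x):=u(x/\tau)$, for which $[u_\tau]_s^{\,2}=\tau^{N-2s}[u]_s^{\,2}$ and $\int G(u_\tau)\,dx=\tau^N\int G(u)\,dx$, excludes strict inequality, since any $\tau<1$ restoring the constraint would produce an element of $\mathcal{M}$ with seminorm strictly below $I$. Hence $u\in\mathcal{M}$ attains $I$.

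Finally, the Euler-Lagrange equation reads $(-\Delta)^s u=\theta\,g(u)$ for some Lagrange multiplier $\theta\in\R$. Testing against $u$ gives $[u]_s^{\,2}=\theta\int_{\R^N}g(u)u\,dx$; since the constraint forces $\int|u|^{p+1}\,dx>\tfrac{2}{p+1}\int u^2\,dx$, one has $\int g(u)u\,dx>0$, hence $\theta>0$. The dilation $v(x):=u\bigl(\theta^{-1/(2s)}x\bigr)$ then converts this into
\begin{equation*}
(-\Delta)^s v+v=|v|^{p-1}v\qquad\text{in }\R^N,
\end{equation*}
that is~\rif{problema}. The function $v$ inherits spherical symmetry from $u$, and strict positivity $v>0$ follows from $v\geq 0$ together with the strong maximum principle for $(-\Delta)^s$ (via the singular integral representation of the operator applied at a putative zero of $v$).
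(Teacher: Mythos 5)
Your proposal follows essentially the same route as the paper: constrained minimization of the Gagliardo seminorm on $\{\int_{\R^N} G(u)\,dx=1\}$, reduction to nonnegative radially decreasing functions via the fractional Polya--Szeg\"o inequality, compactness for radial minimizing sequences, and the dilation argument excluding $\int G(\overline u)\,dx>1$. The implementation differs in two respects: you invoke the compact embedding $H^s_{\mathrm{rad}}(\R^N)\Subset L^q(\R^N)$, $2<q<2N/(N-2s)$, as a black box, whereas the paper derives exactly the convergence it needs from the radial decay lemma plus Strauss's compactness lemma applied to $G_1$ with $Q(t)=t^2+|t|^{2N/(N-2s)}$ (essentially the proof of that embedding); and you carry the argument beyond the minimizer, through the Lagrange multiplier, the rescaling $v(x)=u(\theta^{-1/(2s)}x)$ that removes it, and positivity via the maximum principle --- steps the paper's written proof leaves implicit, since it stops once $\overline u$ is shown to solve the minimization problem. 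This added endgame is a genuine plus, modulo the standard remark that the pointwise maximum-principle argument needs enough regularity of $v$ to evaluate the singular integral at a putative zero.

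Two points need repair, though both are easily fixed. First, a compactly supported test function only shows $\mathcal{M}\neq\emptyset$ (which in fact holds for every $p>1$, not ``exactly'' in the subcritical range) and that $I<+\infty$; it cannot show $I>0$, which your dilation step uses. Positivity of $I$ does hold, e.g.\ from the pointwise bound $\frac{1}{p+1}|t|^{p+1}\le \frac12 t^2 + C|t|^{2N/(N-2s)}$, the constraint, and the fractional Sobolev inequality; alternatively one can avoid needing $I>0$ a priori, as the paper does, by noting that if the scaling argument forced the infimum to vanish then $[\overline u]_{H^s}=0$, hence $\overline u\equiv 0$, contradicting $\int G(\overline u)\,dx\ge 1$. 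Second, the constraint gives $\int|u|^{p+1}\,dx>\frac{p+1}{2}\int u^2\,dx$, not $\frac{2}{p+1}\int u^2\,dx$; your weaker inequality does not by itself yield $\int g(u)u\,dx>0$, but the correct one does (since $(p+1)/2>1$), so the conclusion $\theta>0$ stands.
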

Note that the upperbound on the exponent $p$ is exactly $2^\ast_s+1$, where $2^*_s=2N/(N-2s)$ is the critical Sobolev exponent of the embedding $H^s\hookrightarrow L^p$.
This fractional Sobolev exponent also plays a role
for the nonlinear analysis methods for equations in bounded
domains; see \cite{SV11}.
\vspace{1mm}

The proof of Theorem~\ref{teorema} extends part of that of Theorem 2 in~\cite{BL83}; in particular, we will apply the variational approach by the constrained method mentioned above, for the 
energy functional related
to~\rif{problema}, that is
\begin{equation}\label{def_energia1}
\dys
\EE(u)\,:=\, \frac{1}{2}\iint_{\R^N\times \R^N} \frac{|u(x)-u(y)|^2}{|x-y|^{N+2s}}\,dx\,dy + \int_{\R^N}\Big(\frac{1}{2}|u(x)|^2- \frac{1}{p+1}|u(x)|^{p+1}\Big)\, dx.
\end{equation}
\vspace{1mm}

It is worth noticing that, although the general strategy of the proof will follow the original argument in~\cite{BL83}, we need to operate various modifications due to the non-locality of the fractional Laplacian operator \big(and of the correspondent  norm $H^s(\R^N)$\big). 
Moreover, we will need some energy estimates and preliminary results, 
also including the analog of the classical Polya-Szeg\"o inequality, as 
given in forthcoming Section~\ref{sec_tools}. 

\vspace{1mm}

As for the precise decay of the solution found, a precise bound
may be obtained
via the construction of exact barriers (see Lemma 3.1 in~\cite{SV11x} and, also, Lemma 8 in~\cite{PSV12}). 
Also, it could be taken into account to extend all the results above in 
order to investigate a problem of type~\rif{problema} by substituting 
the nonlinearity with an odd continuous function satisfying standard 
growth assumptions, in the same spirit of~\cite{BL83}.

\vspace{2mm}
The paper is organized as follows. In Section~\ref{sec_prelim} below, we fix notation and we state and prove some preliminary results. Section~\ref{sec_main} is devoted to the proof of Theorem~\ref{teorema}.

\vspace{2mm}
\section{Preliminary results}\label{sec_prelim}

In this section, we state and prove a few preliminary results that we will need in the rest of the paper. First, we will recall some definitions involving the fractional Laplacian operator and we give the definition of the solutions to the problem we are dealing with.

\subsection{Notation}\label{sec_notation}
In the present paper we follow the usual convention of denoting by~$C$ a general positive constant, possibly varying from line to line. Relevant dependencies on parameters will be emphasized by using parentheses; special constants will be denoted by $C_1, C_2$, ...
\vspace{2mm}

We consider the Schwartz space $\SF$ of rapidly decaying $C^{\infty}$ functions in $\R^N$, with the corresponding topology generated by the seminorms
$$
p_N(\varphi)= \sup_{x\in\R^N} (1+|x|)^N \sum_{|\alpha|\leq N} |D^{\alpha}\varphi(x)|\,, \quad N=0,1,2,...\,, 
$$
where $\varphi\in \SF(\R^N)$. 
Let $\SF'(\R^N)$ be the set of all tempered distributions, that is the topological dual of $\SF(\R^N)$. As usual, for any $\varphi\in \SF(\R^N)$, we denote by
$$
\FF\varphi(\xi)\, = \, \frac{1}{(2\pi)^{{N}/{2}}} \! \int_{\R^N} e^{-i\xi \cdot x}\,\varphi(x)\,dx
$$
the Fourier transform of $\varphi$ and we recall that one can extend $\FF$ from $\SF(\R^N)$ to $\SF'(\R^N)$.

\vspace{2mm}
For any $s\in (0,1)$, the fractional Sobolev space $H^s(\R^N)$ is defined by
\begin{equation}\label{def_hs}
H^{s}(\R^N)= \left\{ u\in L^2(\R^N)\; :\; \frac{|u(x)-u(y)|}{|x-y|^{\frac{N}{2}+s}} \in L^2(\R^N \times \R^N)  \right\},
\end{equation}
endowed with the natural norm
\begin{equation*}
\|u\|_{H^{s}(\Omega)}= \left( \int_{\R^N}|u|^2 \,dx\,+\,\iint_{\R^N\times\R^N} \frac{|u(x)-u(y)|^2}{|x-y|^{N+2s}} dx\, dy  \right)^{\!\frac{1}{2}}\!,
\end{equation*}
where the term
\begin{equation}\label{def_hs0}
\dys 
[u]_{H^{s}(\R^N)} \, = \, \|(-\Delta)^{\frac{s}{2}} u\|_{L^2(\R^N)}
\, := \, \left( \iint_{\R^N} \frac{|u(x)-u(y)|^2}{|x-y|^{N+2s}} dx\, dy  \right)^{\!\frac{1}{2}}
\end{equation}
is the so-called {\it Gagliardo semi-norm} of $u$.

\vspace{1mm}
\subsection{A few basic results on the fractional Laplacian and setting of the problem}\label{sec_fractio}
In the following, we make use of equivalent definitions of the fractional Laplacian and  the Gagliardo semi-norm via the Fourier transform.
Indeed, the fractional Laplacian $(-\Delta)^s$ can be seen as a pseudo-differential operator of symbol $|\xi|^s$, as stated in the following

\begin{prop}\label{prop_33}{\rm (see, e.g.,}
\cite[Proposition 3.3]{DPV12} {\rm or}~\cite[Section 3]{Val09}{\rm )}.
Let $s \in (0,1)$ and let $(-\Delta)^s:\SF\to L^2(\R^N)$ be the fractional operator defined by~\rif{def_fractio}. Then, for any $u\in\SF$,
$$
(-\Delta)^s u = \FF^{-1}(|\xi|^{2s}(\FF u)) \ \ \ \forall \xi \in \R^N,
$$
up to a multiplicative constant.
\end{prop}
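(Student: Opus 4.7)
\smallskip

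The plan is to start from the symmetrized form of the singular integral in~\rif{def_fractio} and then pass to Fourier variables, reducing the statement to the computation of an explicit scalar integral. First I would rewrite the principal value definition by a change of variables $z=y-x$, obtaining
$$
(-\Delta)^s u(x)\,=\,\textup{P.V.}\int_{\R^N}\frac{u(x)-u(x+z)}{|z|^{N+2s}}\,dz,
$$
and then symmetrize by averaging the contributions of $z$ and $-z$, so that
$$
(-\Delta)^s u(x)\,=\,-\,\frac{1}{2}\int_{\R^N}\frac{u(x+z)+u(x-z)-2u(x)}{|z|^{N+2s}}\,dz.
$$
For $u\in\SF$, a second-order Taylor expansion gives $|u(x+z)+u(x-z)-2u(x)|\le C|z|^2$ near the origin and boundedness away from the origin, so the integrand is absolutely integrable (since $N+2s-2<N$ near $0$ and $N+2s>N$ at infinity); this removes the principal value issue and legitimates the use of Fubini below.

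Next I would apply the Fourier transform in $x$. Using the translation formula $\FF[u(\cdot+z)](\xi)=e^{i\xi\cdot z}\FF u(\xi)$, I get
$$
\FF\bigl[u(x+z)+u(x-z)-2u(x)\bigr](\xi)\,=\,\bigl(e^{i\xi\cdot z}+e^{-i\xi\cdot z}-2\bigr)\,\FF u(\xi)\,=\,-2\bigl(1-\cos(\xi\cdot z)\bigr)\FF u(\xi).
$$
Swapping the Fourier transform with the integral in $z$ (justified by the integrability established above, together with the Schwartz decay of $\FF u$), it follows that
$$
\FF\bigl[(-\Delta)^s u\bigr](\xi)\,=\,\FF u(\xi)\int_{\R^N}\frac{1-\cos(\xi\cdot z)}{|z|^{N+2s}}\,dz.
$$

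To conclude it remains to identify the last integral with a constant multiple of $|\xi|^{2s}$. For this I would exploit the invariances of the integral. If $\xi\neq 0$, the substitution $z=w/|\xi|$ yields
$$
\int_{\R^N}\frac{1-\cos(\xi\cdot z)}{|z|^{N+2s}}\,dz\,=\,|\xi|^{2s}\int_{\R^N}\frac{1-\cos\bigl(\frac{\xi}{|\xi|}\cdot w\bigr)}{|w|^{N+2s}}\,dw,
$$
and, by rotating $w$ to align $\xi/|\xi|$ with a fixed unit vector $e_1$, the remaining integral equals a finite, positive constant $C(N,s)$ depending only on $N$ and $s$. Finiteness of $C(N,s)$ follows from the same near/far estimates as before, using $1-\cos t=O(t^2)$ at $0$ and boundedness at infinity. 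This gives $\FF[(-\Delta)^s u](\xi)=C(N,s)|\xi|^{2s}\FF u(\xi)$, and inverting the Fourier transform yields the claim up to the multiplicative constant $C(N,s)$.

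The main technical point is not any single step but the bookkeeping: ensuring that the original principal value, the symmetrization, and the Fubini step are all mutually consistent, and that the scaling/rotation argument is carried out only after the integrand has been made absolutely integrable. Everything else is routine manipulation within $\SF$.
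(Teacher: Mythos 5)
Your argument is correct and it is essentially the standard proof that the paper invokes by citation (Proposition~3.3 of~\cite{DPV12}): symmetrize the principal value into the second-order difference quotient, check absolute integrability via a Taylor expansion, pass to Fourier variables to produce the factor $\int_{\R^N}(1-\cos(\xi\cdot z))|z|^{-N-2s}\,dz$, and identify it with $C(N,s)|\xi|^{2s}$ by scaling and rotation invariance. The only point to tighten is the Fubini step, which requires joint integrability in $(x,z)$ (supplied by the uniform rapid decay of $D^2u$ for $u\in\SF$) rather than the pointwise-in-$x$ integrability you established.
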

Analogously, one can see that the fractional Sobolev space $H^s(\R^N)$, given by~\rif{def_hs}, can be defined via the Fourier transform as follows
\begin{equation}\label{def_viafourier}
H^{s}(\R^N)=\left\{\,u\in L^2(\R^N)\;\, :\;\,\int_{\R^N} (1+|\xi|^{2s}) |\FF u(\xi)|^2\,d\xi < +\infty \, \right\}.
\end{equation}
This is a natural consequence of the equivalence stated in the following proposition, whose proof relies on the Plancherel formula.
\begin{prop}\label{pro_equiv}{\rm (see, e.g.,}
\cite[Proposition 3.4]{DPV12}{\rm )}.
Let $s\in (0,1)$. For any $u\in H^s(\R^N)$
\begin{equation}\label{def_viaf}
[u]^2_{H^s(\R^N)} \, = \,  \int_{\R^N} |\xi|^{2s} |\FF u(\xi)|^2\, d\xi,
\end{equation}
up to a multiplicative constant.
\end{prop}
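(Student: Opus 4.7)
The plan is to reduce the double integral defining $[u]_{H^s(\R^N)}^2$ to a single Fourier-side integral by applying Plancherel's theorem to the inner variable, after first recognizing the integrand as the $L^2$-norm of a translation difference. Throughout I would work for $u\in \SF(\R^N)$ and then pass to general $u\in H^s(\R^N)$ by density, since both sides of~\rif{def_viaf} define (equivalent) continuous quadratic forms on~$\SF$.

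First, I would perform the change of variables $z=x-y$ (with Jacobian one) to rewrite the Gagliardo seminorm as
\begin{equation*}
[u]_{H^s(\R^N)}^2 \,=\, \int_{\R^N}\frac{1}{|z|^{N+2s}}\left(\int_{\R^N}|u(x)-u(x-z)|^2\,dx\right)dz,
\end{equation*}
which is legal by Fubini because the integrand is non-negative. For each fixed $z$, the function $x\mapsto u(x)-u(x-z)$ has Fourier transform $(1-e^{-i\xi\cdot z})\FF u(\xi)$, since $\FF(u(\cdot-z))(\xi)=e^{-i\xi\cdot z}\FF u(\xi)$. Plancherel's identity then yields
\begin{equation*}
\int_{\R^N}|u(x)-u(x-z)|^2\,dx \,=\, \int_{\R^N}|1-e^{-i\xi\cdot z}|^2\,|\FF u(\xi)|^2\,d\xi,
\end{equation*}
and substituting and swapping the order of integration (again by non-negativity) gives
\begin{equation*}
[u]_{H^s(\R^N)}^2 \,=\, \int_{\R^N}|\FF u(\xi)|^2\left(\int_{\R^N}\frac{|1-e^{-i\xi\cdot z}|^2}{|z|^{N+2s}}\,dz\right)d\xi.
\end{equation*}

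It remains to identify the inner integral as a constant multiple of $|\xi|^{2s}$. For $\xi\neq 0$, I would perform the substitution $z=|\xi|^{-1}w$, which produces the factor $|\xi|^{2s}$ from the denominator together with a Jacobian~$|\xi|^{-N}$, leaving
\begin{equation*}
\int_{\R^N}\frac{|1-e^{-i\xi\cdot z}|^2}{|z|^{N+2s}}\,dz \,=\, |\xi|^{2s}\int_{\R^N}\frac{|1-e^{-i\,(\xi/|\xi|)\cdot w}|^2}{|w|^{N+2s}}\,dw,
\end{equation*}
and a rotation sending $\xi/|\xi|$ to a fixed unit vector $e_1$ shows the remaining integral is a finite constant $C(N,s)$ independent of~$\xi$. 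Finiteness of $C(N,s)$ is the only thing that requires a small check: near $w=0$, one uses $|1-e^{-iw_1}|^2=O(|w|^2)$, so the integrand is $O(|w|^{2-N-2s})$, which is integrable provided $s<1$; near infinity the numerator is bounded by~$4$, so integrability holds for $s>0$. Combining everything gives~\rif{def_viaf} with multiplicative constant~$C(N,s)$, and the equivalent characterization~\rif{def_viafourier} of $H^s(\R^N)$ follows from the Plancherel identity $\|u\|_{L^2}^2=\|\FF u\|_{L^2}^2$ added to the above. The only potentially delicate point is justifying the Fubini swaps and the density extension, both of which are routine thanks to the non-negativity of the integrands and the continuity of the two quadratic forms.
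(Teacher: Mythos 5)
Your proof is correct and matches the approach the paper points to: the paper itself does not prove the proposition but cites \cite[Proposition~3.4]{DPV12} and notes that the proof ``relies on the Plancherel formula,'' which is exactly the route you take (translate, take Fourier transforms of the difference, apply Plancherel, then rescale and rotate to identify the constant). One small improvement: the density step you sketch at the end is both unnecessary and, as phrased, slightly circular, since a priori you do not know that $u\mapsto\int|\xi|^{2s}|\FF u(\xi)|^2\,d\xi$ is a continuous quadratic form on $H^s(\R^N)$ when $H^s$ is defined through the Gagliardo seminorm; but the issue evaporates because every step of your computation --- the change of variables, the Tonelli swaps, and Plancherel --- is valid verbatim for arbitrary $u\in L^2(\R^N)$ (with both sides allowed to be $+\infty$), so the identity holds on all of $L^2$ and in particular on $H^s$ without any approximation argument.
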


Finally, we recall the definition of variational solutions~$u\in H^s(\R^N)$ to
\begin{equation}\label{pb}
(-\Delta)^s u + u = |u|^{p-1}u \quad \text{in} \ \R^N, \quad u\not\equiv 0,
\end{equation}
where $p>1$.

For any $s\in(0,1)$, a measurable function $u:\R^N\to \R$ is a variational solution to~\eqref{pb} if
\begin{eqnarray}\label{def_sol}
\dys
&& \iint_{\R^N\times \R^N} \frac{\big(u(x)-u(y)\big)\big(\varphi(x)-\varphi(y)\big)}{|x-y|^{N+2s}}\,dx\,dy + \int_{\R^N}u(x)\varphi(x)\,dx \nonumber \\
&& \qquad \qquad \qquad \qquad  \qquad \qquad \qquad \qquad  \qquad \qquad  \quad
= \int_{\R^N}|u(x)|^{p-1}u(x)\varphi(x)\,dx,
\end{eqnarray}
for any function $\varphi \in C^1_0(\R^N)$.

As stated in the Introduction, a natural method to solve~\eqref{pb} is to look for critical points of the related energy functional~$\EE$ on the space~$H^s(\R^N)$ defined in~\eqref{def_energia1}, that is
\begin{equation}\label{def_energia}
\dys
\EE(u)\,:=\,\frac{1}{2} [u]^2_{H^s(\R^N)}  - \int_{\R^N}G(u)\, dx,
\end{equation}
where $[u]_{H^s}$ is defined by~\rif{def_hs0} and we denoted by $G$ the function 
\begin{equation}\label{def_g}
G(u):= \frac{1}{p+1}|u|^{p+1} - \frac{1}{2}|u|^2.
\end{equation}
Therefore, from now on we will focus on the following variational problem 
\begin{equation}\label{var}
\dys
\min \left\{ [u]^2_{H^s(\R^N)} : u\in H^s(\R^N),\, \int_{\R^N} G(u)\, dx = 1 \right\}.
\end{equation}

\vspace{1mm}
\subsection{Tools}\label{sec_tools}

As already mentioned, \cite{Poh65} provided an elementary identity from which one can deduce some necessary conditions for the existence of a solution to problem~\rif{pb_classic}. Analogously, a solution to problem~\rif{pb} has to satisfy a Pohozaev identity for any $s\in(0,1)$, that is of type
\begin{equation}\label{eq_spoho} 
\frac{N-2s}{2}\,[u]^2_{H^s(\R^N)} \, = \, N\int_{\R^N} G(u)\, dx,
\end{equation}
where $G$ is given by~\rif{def_g}. In view of the definition of the fractional norm via the Fourier transform in~\rif{def_viaf}, a proof can be obtained by modifying the general arguments in~\cite{Poh65}, that is, by choosing suitable test functions (see, e.~\!g., Lemma~5.1 in~\cite{FL10}, where properties of ground state solutions for the equation~\rif{eq_spoho} in 1D are investigated).

\vspace{2mm}
Now, for any measurable function $u$ consider the corresponding symmetric radial decreasing rearrangement $u^*$, whose classical
definition and basic properties
can be found, for instance, in~\cite[Chapter 2]{Kaw85}. 
As in the classic case (i.~\!e., the Polya-Szeg\"o inequality~\cite{PS45}), also in the fractional framework the energy of $u^*$ decreases with respect to that of $u$. Again, by using the Fourier characterization of $[u]_{H^s(\R^N)}$ given by Proposition~\ref{pro_equiv}, one can plainly apply the symmetrization lemma by Beckner (\cite{Bec92}; see also~\cite{AL89}) to obtain the following
\begin{lemma}\label{lem_pz}{\rm (see, e.g.,} \cite[Theorem 1.1]{Par11}{\rm)}.
Let $s\in (0,1)$. For any $u\in H^s(\R^N)$, the following inequality holds
\begin{equation}\label{eq_pz}
\dys
\iint_{\R^N\times\R^N} \frac{|u^*(x)-u^*(y)|^2}{|x-y|^{N+2s}} dx\, dy
\, \leq \, \iint_{\R^N\times\R^N} \frac{|u(x)-u(y)|^2}{|x-y|^{N+2s}} dx\, dy,
\end{equation}
where $u^*$ denotes the symmetric radial decreasing rearrangement of $u$.
\end{lemma}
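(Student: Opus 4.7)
The plan is to reduce the inequality to the classical Riesz rearrangement inequality, using the Fourier representation of the Gagliardo seminorm together with the subordination formula that expresses $|\xi|^{2s}$ as an average of Gaussian symbols. By Proposition~\ref{pro_equiv},
\begin{equation*}
[u]^2_{H^s(\R^N)} \, = \, \int_{\R^N} |\xi|^{2s}\,|\FF u(\xi)|^2\,d\xi,
\end{equation*}
up to a positive multiplicative constant. Since $s\in(0,1)$, one has the Balakrishnan-type identity
\begin{equation*}
|\xi|^{2s} \, = \, c_s \int_0^{+\infty}\frac{1-e^{-t|\xi|^2}}{t^{1+s}}\,dt, \qquad c_s>0.
\end{equation*}
Inserting this into the Fourier formula and interchanging integrals by Tonelli (all integrands are non-negative), we obtain, for some $c'_s>0$,
\begin{equation*}
[u]^2_{H^s(\R^N)} \, = \, c'_s\int_0^{+\infty} \frac{1}{t^{1+s}}\Bigl( \|u\|^2_{L^2(\R^N)} - \int_{\R^N}(h_t*u)(x)\,u(x)\,dx\Bigr)\,dt,
\end{equation*}
where $h_t(x)=(4\pi t)^{-N/2}e^{-|x|^2/(4t)}$ is the standard heat kernel on $\R^N$.

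Next, I would exploit the fact that for every $t>0$ the kernel $h_t$ is positive, radially symmetric, and strictly decreasing in $|x|$. The Riesz rearrangement inequality then gives
\begin{equation*}
\int_{\R^N}(h_t*u)(x)\,u(x)\,dx \, = \, \iint_{\R^N\times\R^N} u(x)\,h_t(x-y)\,u(y)\,dx\,dy\, \leq \, \int_{\R^N}(h_t*u^*)(x)\,u^*(x)\,dx,
\end{equation*}
for every $t>0$. Since the symmetric decreasing rearrangement preserves the $L^2$ norm, $\|u\|_{L^2(\R^N)}=\|u^*\|_{L^2(\R^N)}$, the $t$-integrand in the representation above is pointwise larger for $u$ than for $u^*$. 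Integrating against the positive weight $t^{-1-s}$ yields the claimed inequality $[u^*]^2_{H^s(\R^N)}\leq[u]^2_{H^s(\R^N)}$.

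The step I expect to be the main technical obstacle is making the Tonelli exchange and the pointwise-in-$t$ comparison fully rigorous: the function $(1-e^{-t|\xi|^2})/t^{1+s}$ is non-negative but is not jointly in $L^1(dt\,|\FF u|^2 d\xi)$ without some care, so one should truncate in $t\in(\delta,1/\delta)$ and pass to the limit, using $1-e^{-t|\xi|^2}\sim t|\xi|^2$ as $t\to 0^+$ to handle the small-$t$ singularity and the uniform bound $1-e^{-t|\xi|^2}\leq 1$ together with $s>0$ to handle $t\to+\infty$. As a conceptually different route, which avoids the Fourier transform altogether, one could apply the rearrangement theorem of Almgren--Lieb~\cite{AL89} directly to the convex integrand $\Phi(r)=r^2$ and the radially decreasing kernel $|x-y|^{-N-2s}$ in the double integral defining $[u]^2_{H^s(\R^N)}$; this produces the inequality in one step but hides the mechanism behind the abstract rearrangement theorem, whereas the heat-kernel approach corresponds literally to the Beckner symmetrization argument invoked in the statement.
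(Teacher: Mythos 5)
Your argument is correct and is essentially the route the paper itself points to: the text simply invokes the Fourier characterization of Proposition~\ref{pro_equiv} together with Beckner's symmetrization lemma (equivalently \cite[Theorem 1.1]{Par11}), and the mechanism behind that cited result is exactly your subordination identity $|\xi|^{2s}=c_s\int_0^{+\infty}t^{-1-s}(1-e^{-t|\xi|^2})\,dt$ combined with the Riesz rearrangement inequality for the heat kernel, with Tonelli applying directly since all integrands are nonnegative. The only point worth one extra line is the case of sign-changing $u$: since $h_t>0$ one has $\int_{\R^N}(h_t*u)\,u\,dx\le\int_{\R^N}(h_t*|u|)\,|u|\,dx$ and $u^*=|u|^*$, after which your pointwise-in-$t$ comparison goes through verbatim.
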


\vspace{2mm}
Next we recall two results which we will use in the proof of Theorem~\ref{teorema} (see, in particular,  Step 2 there).
The first one is the following \emph{radial lemma}. 
\begin{lemma}\label{lem_radial} 
Let $u\in L^2(\R^N)$ be a nonnegative radial decreasing  function. 
Then 
\begin{equation*}
|u(x)| \leq  \left(\frac{N}{\omega _{N-1}}\right)^{1/2} |x|^{-N/2} \|u \|_{L^2(\R^N)}, 
\quad\forall x\neq 0,   
\end{equation*}
where $\omega_{N-1}$ is the Lebesgue measure of the unit sphere in $\R^{N}$. 
\end{lemma}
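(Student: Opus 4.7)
The plan is to exploit the monotonicity of $u$ in the radial direction to bound $u(x)$ in terms of the $L^2$ mass that $u$ must carry on the ball $B_{|x|}(0)$. Fix $x \neq 0$, and set $r := |x|$. Since $u$ is nonnegative and radially decreasing, for every $y \in B_r(0)$ one has $|y| \leq r$, and hence $u(y) \geq u(x) \geq 0$. This pointwise lower bound is the only structural fact I need.

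Next, I would estimate the $L^2$ norm from below by restricting the integration to $B_r(0)$:
\begin{equation*}
\|u\|_{L^2(\R^N)}^2 \,\geq\, \int_{B_r(0)} |u(y)|^2 \, dy \,\geq\, |u(x)|^2 \cdot |B_r(0)|.
\end{equation*}
Using the standard formula $|B_r(0)| = \omega_{N-1} r^N / N$ (where $\omega_{N-1}$ is the surface measure of the unit sphere, as stated in the lemma), this gives
\begin{equation*}
\|u\|_{L^2(\R^N)}^2 \,\geq\, \frac{\omega_{N-1}}{N} \, r^N \, |u(x)|^2.
\end{equation*}
Solving for $|u(x)|$ and recalling $r = |x|$ yields the claimed inequality.

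There is essentially no obstacle here: the proof is a one-line application of monotonicity together with the elementary formula for the volume of an $N$-ball. The only minor care to take is to verify that $u$ being radial decreasing really does allow the pointwise comparison $u(y) \geq u(x)$ for $|y| \leq |x|$ (which follows directly from the definition), and that one is consistent with the convention $|B_r(0)| = \omega_{N-1} r^N / N$ used in the statement.
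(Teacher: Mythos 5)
Your proof is correct and follows essentially the same argument as the paper: restrict the $L^2$ norm to the ball of radius $|x|$, use monotonicity to bound $u$ from below by $u(x)$ there, and invoke the volume $\omega_{N-1}|x|^N/N$ (which the paper writes out via polar coordinates). No issues.
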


\begin{proof}
Setting $r=|x|$, we have that, for every $r>0$, 
\begin{equation*}
 \|u \|^2_{L^2(\R^N)} \, = \, \int_{\R^N} |u(x)|^2 \, dx 
                                 \, \geq \, \omega_{N-1} \int_0^R  |u(r)|^2 r^{N-1} \, dr 
                                 \, \geq  \, \omega_{N-1} |u(R)|^2 \frac{R^N}{N},  \nonumber
\end{equation*}
where in the last inequality we used the fact that $u$ is decreasing. 
\end{proof} 
The second result is a \emph{compactness lemma} due to Strauss \cite{Str77}
(see also~\cite[Theorem A.I]{BL83} for a simple proof).

\begin{lemma}\label{lem_compact} 
Let $P, Q:\R\rightarrow\R$ be two continuous
functions satisfying 
\begin{equation}
\frac{P(t)}{Q(t)} \rightarrow 0, \quad \mathrm{as\ } |t|\rightarrow +\infty . \label{PQ}
\end{equation}
Let $u_n:\R^N\rightarrow\R$ be a sequence of measurable functions such that 
\begin{equation}
\sup_n \int_{\R^N}  |Q(u_n(x))| \, dx < +\infty,   \label{PQ1}
\end{equation}
and 
\begin{equation}
P(u_n(x))\rightarrow v(x) \quad \mathrm{a.~\!e.~in}~\R^N \quad \mathrm{as\ } n\rightarrow +\infty. 
\label{PQ2}
\end{equation} 
Then, for every bounded Borel set $B$, we have
\begin{equation}
\int_B  |P(u_n(x)) - v(x)| \, dx\rightarrow 0 \quad \mathrm{as\ } n\rightarrow +\infty. 
\label{conv}
\end{equation}
If we further assume that 
\begin{equation}
\frac{P(t)}{Q(t)} \rightarrow 0 \quad \mathrm{as\ } t\rightarrow 0, 
\label{PQ4}
\end{equation}
and 
\begin{equation}
u_n(x) \rightarrow 0 \quad \mathrm{as\ } |x|\rightarrow +\infty, \quad 
\mathrm{uniformly\ with\ respect\ to\ } n,  \label{PQ5}
\end{equation}
then $P(u_n)$ converges to $v$ in $L^1(\R^N)$ as $n\rightarrow +\infty$. 
\end{lemma}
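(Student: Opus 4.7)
The plan is to split the argument into two parts, each based on Vitali's convergence theorem: a.e.\ convergence plus uniform integrability on a set of finite measure implies $L^1$ convergence.

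The key analytic input is the pointwise domination derived from~\rif{PQ}: for every $\eps>0$ there exists $T_{\eps}>0$ such that $|P(t)|\leq \eps|Q(t)|$ for $|t|\geq T_{\eps}$, and, by the continuity of $P$,
$$
|P(t)|\,\leq\, \eps\,|Q(t)|+K_{\eps}, \quad \forall t\in\R, \qquad K_{\eps}:=\sup_{|t|\leq T_{\eps}}|P(t)|<+\infty.
$$
Plugging $t=u_n(x)$, integrating over an arbitrary measurable set $A\subset\R^N$ and invoking~\rif{PQ1}, I obtain
$$
\int_{A}|P(u_n(x))|\,dx \,\leq\, \eps\, C+K_{\eps}|A|, \qquad C:=\sup_{n}\int_{\R^{N}}|Q(u_n)|\,dx.
$$
This furnishes simultaneously a uniform $L^{1}$ bound and the uniform absolute continuity of the integrals of $\{P(u_n)\}$ on any bounded Borel set $B$. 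Combined with~\rif{PQ2} and Vitali's theorem, these two facts yield~\rif{conv}; Fatou applied to $|P(u_n)|$ then gives $v\in L^{1}_{\mathrm{loc}}(\R^{N})$.

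For the global conclusion, the extra hypotheses~\rif{PQ4} and~\rif{PQ5} handle the tails. By~\rif{PQ4}, for every $\eps>0$ there exists $\delta_{\eps}>0$ with $|P(t)|\leq \eps|Q(t)|$ whenever $|t|\leq \delta_{\eps}$; by~\rif{PQ5}, there exists $R_{\eps}>0$ such that $|u_n(x)|\leq \delta_{\eps}$ for every $|x|\geq R_{\eps}$ and every $n$. Hence
$$
\int_{\CC B_{R_{\eps}}}|P(u_n(x))|\,dx \,\leq\, \eps\int_{\R^{N}}|Q(u_n)|\,dx \,\leq\, \eps\,C,
$$
and Fatou delivers the same tail bound for $v$. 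Splitting $\R^{N}=B_{R_{\eps}}\cup\CC B_{R_{\eps}}$, applying~\rif{conv} on the ball $B_{R_{\eps}}$, and estimating the two tail integrals by $2\eps\,C$, the triangle inequality yields $P(u_n)\to v$ in $L^{1}(\R^{N})$ by the arbitrariness of $\eps$.

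The main obstacle is spotting the decomposition $|P|\leq \eps|Q|+K_{\eps}$: this single inequality produces at once the uniform integrability required to run Vitali on every bounded set and, thanks to~\rif{PQ4}--\rif{PQ5}, the uniform smallness of the tails; the remainder of the argument is a standard localisation.
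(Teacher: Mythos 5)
Your proof is correct: the domination $|P(t)|\le \varepsilon |Q(t)|+K_\varepsilon$ gives uniform integrability on sets of finite measure (hence \eqref{conv} via Vitali), and the hypotheses \eqref{PQ4}--\eqref{PQ5} give the uniform tail bound $\int_{\{|x|\ge R_\varepsilon\}}|P(u_n)|\le \varepsilon\,C$, which yields the $L^1(\R^N)$ convergence. Note that the paper does not prove this lemma itself but quotes it from Strauss \cite{Str77} (see also \cite[Theorem A.I]{BL83}), and the classical argument there rests on the same splitting of $P$ into a part controlled by $\varepsilon|Q|$ and a bounded part, so your route is essentially the standard one.
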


We conclude this section with the following Lemma
\ref{lem_wR}, in which we state and prove some $H^s$ estimates, 
which, in turn,
imply that there exists a nontrivial  competitor for the variational problem~\rif{var}, as described in the subsequent Remark~\ref{rem_notrivial}.

\begin{lemma}\label{lem_wR}
Let~$\zeta$, $R>0$. For any~$t\ge 0$ let
$$ v_R(t):=\left\{
\begin{matrix}
\zeta & {\mbox{ if $t\in [0,R]$,}}\\[1ex]
\zeta \,(R+1-t)  & {\mbox{ if $t\in (R,R+1)$,}}\\[1ex]
0 & {\mbox{ if $t\in [R+1,+\infty)$.}}
\end{matrix}
\right.$$
For any~$x\in \R^N$, let~$w_R(x):=v_R(|x|)$. 

Then, $w_R\in H^s(\R^N)$
for any~$s\in(0,1)$ and there exists~$C(N,s,R)>0$ such 
that~$\|w_R\|_{H^s(\R^N)}\le C(N,s,R)\,\zeta$.
\end{lemma}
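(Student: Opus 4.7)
The plan is to control separately the $L^2$ part and the Gagliardo seminorm part of $\|w_R\|_{H^s(\R^N)}$. The $L^2$ bound is immediate: since $|w_R|\le\zeta$ and $w_R$ is supported in the ball $B_{R+1}$, one gets $\|w_R\|_{L^2(\R^N)}^2\le \zeta^2|B_{R+1}|$, which is already of the claimed form $C(N,R)\,\zeta^2$.

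The main work is estimating the Gagliardo seminorm
$$
[w_R]_{H^s(\R^N)}^2=\iint_{\R^N\times\R^N}\frac{|w_R(x)-w_R(y)|^2}{|x-y|^{N+2s}}\,dx\,dy.
$$
Two ingredients are crucial: first, $v_R$ is Lipschitz in $t$ with Lipschitz constant $\zeta$ (the slope is $-\zeta$ on $(R,R+1)$ and $0$ elsewhere), so by the reverse triangle inequality $|w_R(x)-w_R(y)|\le \zeta\bigl||x|-|y|\bigr|\le \zeta|x-y|$; second, $w_R$ vanishes outside $B_{R+1}$ and is bounded by $\zeta$ throughout. I would then split the domain of integration according to whether $|x-y|\le 1$ or $|x-y|>1$.

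For the near-diagonal piece, I use the Lipschitz bound in the numerator. The integrand vanishes whenever both $x$ and $y$ lie outside $B_{R+1}$; since $|x-y|\le 1$, this forces at least one of them in $B_{R+2}$. Hence
$$
\iint_{|x-y|\le 1}\frac{|w_R(x)-w_R(y)|^2}{|x-y|^{N+2s}}\,dx\,dy
\,\le\,\zeta^2\int_{B_{R+2}}dx\int_{|z|\le 1}\frac{dz}{|z|^{N+2s-2}},
$$
and the inner integral is finite because $s<1$ (the radial integrand is $r^{1-2s}$). For the far piece, I use the crude bound $|w_R(x)-w_R(y)|\le 2\zeta$ together with the fact that the integrand vanishes unless $x$ or $y$ lies in $B_{R+1}$; by symmetry,
$$
\iint_{|x-y|>1}\frac{|w_R(x)-w_R(y)|^2}{|x-y|^{N+2s}}\,dx\,dy
\,\le\,2\cdot 4\zeta^2\,|B_{R+1}|\int_{|z|>1}\frac{dz}{|z|^{N+2s}},
$$
and this integral is finite since $2s>0$ (radial integrand $r^{-1-2s}$). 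Summing the two contributions with the $L^2$ estimate yields the desired bound $\|w_R\|_{H^s(\R^N)}\le C(N,s,R)\,\zeta$.

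No real obstacle arises here: the only subtle point is making sure the decomposition at $|x-y|=1$ is the right one, so that the singular kernel is tamed by the Lipschitz improvement at short scales and by the decay of $|z|^{-N-2s}$ at large scales. One could in principle derive this directly from known interpolation results (a $C^{0,1}$ function with compact support is in every $H^s$), but the hands-on splitting is the shortest self-contained argument and tracks explicitly the linear dependence on $\zeta$.
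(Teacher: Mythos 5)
Your proof is correct, but it takes a genuinely different route from the paper's. You exploit only two soft facts, namely that $w_R$ is globally Lipschitz with constant $\zeta$ (so $|w_R(x)-w_R(y)|\le \zeta\bigl||x|-|y|\bigr|\le\zeta|x-y|$) and that it is bounded by $\zeta$ with support in $B_{R+1}$, and then split the double integral at $|x-y|=1$: the Lipschitz gain tames the kernel near the diagonal (integrand $\le \zeta^2|x-y|^{2-N-2s}$, integrable since $s<1$, with $x$ confined to $B_{R+2}$), while the far region is handled by boundedness, compact support and the integrability of $|z|^{-N-2s}$ for $|z|>1$ (since $s>0$). This is the standard ``bounded Lipschitz function with compact support lies in every $H^s$'' argument, and it is shorter and more general than what the paper does, while still giving the required linear dependence on $\zeta$. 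The paper instead decomposes $\R^N\times\R^N$ according to the specific geometry of $w_R$ (the core $B_R$ where it is constant, the annulus $B_{R+1}\setminus B_R$ where it is linear, and the exterior where it vanishes), introduces an auxiliary exponent $\sigma\in(0,1)$ depending on the position of $s$ relative to $1/2$, and invokes Lemma~13 of \cite{CV} to bound cross-terms of the form $\iint_{B_R\times(\R^N\setminus B_R)}|x-y|^{-N-\sigma}\,dx\,dy\le C R^{N-\sigma}$. What that heavier bookkeeping buys is an explicit polynomial dependence of the constant on $R$ (terms like $R^{N-\sigma}$, $(R+1)^{N+2-2s}$, $(R+1)^{2N}$); since the lemma as stated only asks for some $C(N,s,R)$, and the subsequent Remark~\ref{rem_notrivial} needs explicit $R$-dependence only for the integral of $G(w_R)$, your more elementary argument fully suffices for the statement.
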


\begin{proof} Clearly,
\begin{equation}\label{s3}
\|w_R\|^2_{L^2(\R^N)} \, \le \,  \int_{B_{R+1}} \zeta^2 \,dx \, =  \,  \omega_N (R+1)^N 
\zeta^2.
\end{equation}
Now, we let
$$ \sigma:=\left\{
\begin{matrix}
2s & {\mbox{ if $s\in(0,\, 1/2)$,}}\\
1/2 & {\mbox{ if $s=1/2$,}}
\\ 2s -1 & {\mbox{ if $s\in(1/2,\,1)$.}}
\end{matrix}
\right. $$
We remark that~$\sigma\in (0,1)$ and therefore, by \cite[Lemma~13]{CV},
\begin{equation}\label{cv}
\iint_{B_R\times (\R^N\setminus B_R)}\frac{dx\,dy}{|x-y|^{N+\sigma}}
 \, \le \,  C_1(N,\sigma) \,R^{N-\sigma}.
\end{equation}
Furthermore, if~$x\in B_{R+1}\setminus B_R$ and~$y\in B_R$, we have that
$$
|x-y|\ge |x|-|y|\ge |x|-R=\big||x|-R\big|
$$
and
$$
1=(R+1)-R\ge |x|-R=\big||x|-R\big|;
$$
hence
$$
\big||x|-R\big|^2 
 \, \le  \, \min \big\{ 1, \,  |x-y|^2 \big\}
 \, \le \, \min \big\{ 1, \,  |x-y|, |x-y|^{1/2} \big\}.
$$
As a consequence,
if~$x\in B_{R+1}\setminus B_R$ and~$y\in B_R$,
\begin{eqnarray*}
\frac{\big||x|-R\big|^2}{|x-y|^{N+2s}}
&\le& \left\{
\begin{matrix}
1\cdot |x-y|^{-N-2s} & {\mbox{ if $s\in(0,\, 1/2)$,}}\\[1ex]
|x-y|^{1/2}\cdot |x-y|^{-N-2s}
& {\mbox{ if $s=1/2$,}}\\[1ex]
|x-y| \cdot |x-y|^{-N-2s}
&{\mbox{ if $s\in(1/2,\,1)$}}\end{matrix}\right.
\\[1ex]
 &=& |x-y|^{-N-\sigma}.
\end{eqnarray*}
Accordingly, by~\eqref{cv},
\begin{equation}\label{s1}
\begin{split}
&
\int_{B_R}\left( 
\int_{B_{R+1}\setminus B_R} \frac{\big||x|-R\big|^2}{|x-y|^{N+2s}}
\,dx\right)\,dy
 \, \le \, 
\int_{B_R}\left(
\int_{B_{R+1}\setminus B_R} |x-y|^{-N-\sigma}
\,dx\right)\!dy
\\ &\qquad \, \le \, 
\int_{B_R}\left(
\int_{\R^N\setminus B_R} |x-y|^{-N-\sigma}\,dx\right)\!dy
 \, \le \,  C_1(N,\sigma)\, R^{N-\sigma}
\end{split}
\end{equation}
for a suitable~$C_1(N,\sigma)>0$. 

\vspace{2mm}

Similarly, if~$x\in \R^N\setminus B_{R+1}$ and~$y\in B_{R+1}\setminus B_R$, we have that
$$
|x-y|  \, \ge \,  |x|-|y|
 \, \ge \,  R+1 - |y|
 \, = \, \big||y|- \left(R+1\right) \big|
$$
and
$$
1 \, = \, (R+1)-R
 \, \ge \,  (R+1) - |y|
 \, = \, \big||y| - \left(R+1\right) \big|;
 $$
hence
$$
\big||y|-\left(R+1\right)\big|^2
 \, \le \, \min \big\{ 1,\, |x-y|^2 \big\}
 \, \le \, \min \big\{ 1,\, |x-y|,\, |x-y|^{1/2} \big\}.
$$ 
Then, if~$x\in \R^N\setminus B_{R+1}$ and~$y\in B_{R+1}\setminus B_R$, 
\begin{eqnarray*}
\frac{\big||y|-\left(R+1\right)\big|^2}{|x-y|^{N+2s}}
\!&\le&\! \left\{
\begin{matrix}
1\cdot |x-y|^{-N-2s} & {\mbox{ if $s\in(0,\, 1/2)$,}}\\[1ex]
|x-y|^{1/2}\cdot |x-y|^{-N-2s}
& {\mbox{ if $s=1/2$,}}\\[1ex]
|x-y| \cdot |x-y|^{-N-2s}
&{\mbox{ if $s\in(1/2,\,1)$}}\end{matrix}\right.
\\[1ex]
&=&\! |x-y|^{-N-\sigma}.
\end{eqnarray*} 
Now, using again~\eqref{cv}, with ~$R+1$ instead of ~$R$, we get
\begin{equation}\label{s7}
\begin{split}
& \int_{B_{R+1}\setminus B_R}\left( 
\int_{\R^N\setminus B_{R+1}} \frac{\big||y|-\left(R+1\right)\big|^2}{|x-y|^{N+2s}}
\,dx\right)\,dy
\\[1ex]
 &\qquad \qquad \qquad\le
\int_{B_{R+1}\setminus B_R}\left(
\int_{\R^N\setminus B_{R+1}} |x-y|^{-N-\sigma}
\,dx\right)\,dy
\\[1ex]
 &\qquad\qquad \qquad\le
\int_{B_{R+1}}\left(
\int_{\R^N\setminus B_{R+1}} |x-y|^{-N-\sigma}\,dx\right)\,dy
\ \le \ C_2(N,\sigma)\, \left(R+1\right)^{N-\sigma},
\end{split}
\end{equation}
for a suitable~$C_2(N,\sigma)>0$. 

\vspace{2mm}

Moreover, if~$x\in \R^N\setminus B_{R+1}$ and~$y\in B_R$, we have that
\begin{eqnarray*}
|x-y| \!  & \ge & \! |x|-|y|
\ = \ \frac{|x|}{R+1}+\frac{R\,|x|}{R+1}-|y|
\frac{|x|}{R+1}+\frac{R(R+1)}{R+1}-R
\\
& \ge &\!\frac{|x|}{R+1},
\end{eqnarray*}
and therefore
\begin{equation}\label{s2}
\begin{split}
& \int_{B_R}\left(\int_{\R^N\setminus B_{R+1}}
\frac{1}{|x-y|^{N+2s}}
\,dx\right)\,dy 
\\[1ex]
&\qquad \ \le\, (R+1)^{N+2s} \int_{B_R}\left(\int_{\R^N\setminus 
B_{R+1}}
\frac{1}{|x|^{N+2s}}
\,dx\right)\,dy
\\[1ex]
&\qquad \ =\,\omega_N \omega_{N-1} R^N
(R+1)^{N+2s} \int_{R+1}^{+\infty}
\frac{\varrho^{N-1} }{\varrho^{N+2s}}
\,d\varrho \ \le \ C_3(N,s)\, (R+1)^{2N} 
\end{split}
\end{equation}
for a suitable~$C_3(N,s)>0$. 

\vspace{1mm}
Now, we observe that if~$x$, $y\in B_{R+1}\setminus B_R$,
we have that~$|x-y|\le 2(R+1)$. Thus,
we make the substitution~$z:=x-y$ in the following computation
\begin{equation}\label{s6}\begin{split}
&\iint_{(B_{R+1}\setminus B_R)\times(B_{R+1}\setminus B_R)}
\frac{\big||x|-|y|\big|^2}{|x-y|^{N+2s}}\,dx\,dy \\[1ex]
&\qquad \qquad\qquad\qquad\le \, \iint_{(B_{R+1}\setminus B_R)\times(B_{R+1}\setminus B_R)} 
|x-y|^{2-N-2s}\,dx\,dy \\[1ex]
&\qquad\qquad\qquad \qquad\le \,  \int_{ B_{R+1}\setminus B_R }
\left(
\int_{B_{2(R+1)}} |z|^{2-N-2s}
\,dz\right)\,dx
\\[1ex]
 &\qquad\qquad\qquad\qquad\le \, \omega_N \omega_{N-1} (R+1)^N \int_{0}^{2(R+1)}
\varrho^{2-N-2s}\varrho^{N-1}\,d\varrho
\\[1ex]
 &\qquad\qquad\qquad\qquad\le \, C_4(N,s) \,(R+1)^{N+2-2s},
\end{split}\end{equation}
for a suitable~$C_4(N,s)>0$.

Thus, making use of~\eqref{s1}, \eqref{s7}, \eqref{s2}
and~\eqref{s6}, we conclude that
\begin{eqnarray*}
&&\!\!\!\!\!\!\!\!\!\!\!\!\!\!\!\!\!\!
\iint_{\R^N\times\R^N}\frac{|w_R(x)-w_R(y)|^2}{|x-y|^{N+2s}}\,dx\,dy\\[2ex]
&=& 
\iint_{B_{R+1}\times 
B_{R+1}}\frac{|w_R(x)-w_R(y)|^2}{|x-y|^{N+2s}}\,dx\,dy \\
&&\qquad+
2\iint_{B_{R+1}\times(\R^N\setminus
B_{R+1})}\frac{|w_R(x)-w_R(y)|^2}{|x-y|^{N+2s}}\,dx\,dy
\\[2ex] &=&
2\iint_{B_{R}\times
(B_{R+1}\setminus B_R)}\frac{|w_R(x)-w_R(y)|^2}{|x-y|^{N+2s}}\,dx\,dy 
\\ &&\qquad+
\iint_{(B_{R+1}\setminus B_R)\times
(B_{R+1}\setminus B_R)}\frac{|w_R(x)-w_R(y)|^2}{|x-y|^{N+2s}}\,dx\,dy
\\ &&\qquad+
2\iint_{B_{R+1}\times(\R^N\setminus
B_{R+1})}\frac{|w_R(x)-w_R(y)|^2}{|x-y|^{N+2s}}\,dx\,dy
\\[2ex] &=&
2\int_{B_R}\left(\int_{B_{R+1}\setminus B_R} 
\frac{\zeta^2\big|R-|x|\big|^2}{|x-y|^{N+2s}}
\,dx
\right)\,dy\\ 
&&\qquad+
\iint_{(B_{R+1}\setminus B_R)\times
(B_{R+1}\setminus 
B_R)}\frac{\zeta^2\big| |x|-|y|\big|^2}{|x-y|^{N+2s}}\,dx\,dy
\\ &&\qquad+
2\int_{B_R}\left(\int_{\R^N\setminus B_{R+1}}
\frac{\zeta^2}{|x-y|^{N+2s}}
\,dx
\right)\,dy \\
\\ &&\qquad+
2\int_{B_{R+1}\setminus B_R}\left(\int_{\R^N\setminus B_{R+1}}
\frac{\zeta^2\big| |y|-\left( R+1\right)\big|^2}{|x-y|^{N+2s}}
\,dx
\right)\,dy
\\[2ex] &\le& 2\zeta^2\Big(  C_1(N,\sigma) \,R^{N-\sigma}+ C_4(N,s) 
\,(R+1)^{N+2-2s}+C_3(N,s) 
\,(R+1)^{2N} \\
&&\qquad \, +\,C_2(N,\sigma) 
\,(R+1)^{N-\sigma}\Big).
\end{eqnarray*}
{F}rom this and~\eqref{s3}, the desired result easily follows.
\end{proof}

\begin{rem}\label{rem_notrivial}
By Lemma \ref{lem_wR},
the set in the minimum problem~\rif{var}  
is not empty. Indeed, if $w_R\in H^s(\R^N)$ is defined as in Lemma~\ref{lem_wR}, we have that 
\begin{eqnarray*}
\int_{\R^N} G\left(w_R(x) \right) \, dx  
&=&  \int_{B_{R+1}} G\left(w_R(x) \right) \, dx \nonumber\\
&=& \int_{B_{R}} G\left(w_R(x) \right) \, dx + \int_{B_{R+1}\setminus B_R} G\left(w_R(x) \right) \, dx \nonumber\\
&\geq & G\left(\zeta\right)    |B_R| -  |B_{R+1}\setminus B_R| \left(\max_{t\in\left[0, \zeta\right]} |G(t)|\right), \nonumber
\end{eqnarray*}
where $|\cdot |$ denotes the Lebesgue measure. 
This implies that there exist two positive constants $C_1$ and $C_2$ such that 
\begin{equation*}
\int_{\R^N} G\left(w_R(x) \right) \, dx \, \geq \, C_1 R^N - C_2 R^{N-1}, \nonumber
\end{equation*}
and so we can choose $R>0$ large enough such that 
$\dys \int_{\R^N} G\left(w_R(x) \right) \, dx >0$. 
\vspace{1mm}

Now we make the scale change $w_{R, \sigma}(x)= w_R\left({x}/{\sigma}\right)$, 
and a suitable choice of $\sigma >0$, so that 
$$
\dys \int_{\R^N} G\left(w_{R, \sigma}(x) \right) \, dx
\,  =  \, \sigma^N 
\int_{\R^N} G\left(w_{R}(x) \right) \, dx =1.
$$  
\end{rem}

\vspace{2mm}
\section{Proof of Theorem~\ref{teorema}}\label{sec_main}
In the same spirit of the proof of Theorem~2 in~\cite{BL83}, we divide that of Theorem~\ref{teorema} in a few steps. For the reader's convenience, we will give full details of the proof,  by taking into account the preliminary results in Section~\ref{sec_tools} together with the modifications due to the presence of the fractional Sobolev spaces.

\begin{proof}

\noindent
\\ {\it Step 1 - A minimizing sequence $u_n$.\ } 
Consider a sequence $\left\lbrace u_n\right\rbrace\subseteq H^s(\R^N)$ 
such that $\dys \int_{\R^N} G(u_n)\, dx = 1$ and 
\begin{equation}
\lim_{n\rightarrow +\infty} [u_n]^2_{H^s(\R^N)} = \inf \left\{ [u]^2_{H^s(\R^N)} : u\in H^s(\R^N), \int_{\R^N} G(u)\, dx = 1 \right\}
\, \geq \, 0.    \label{lim}
\end{equation}
By triangle inequality,
$$
\big||u_n(x)|-|u_n(y)|\big| \leq |u_n(x)-u_n(y)|$$
thus the Gagliardo semi-norm of $|u_n|$
is not bigger than the one of $u_n$.

So, without loss of generality, we may suppose that $u_n$ is
nonnegative.
Let $u^{*}_n$ denote the symmetric radial decreasing rearrangement of $u_n$. 
Then
$$\dys \int_{\R^N} G(u^{*}_n)\, dx = \int_{\R^N} G(u_n)\, dx = 1$$
and so,
in view of Lemma~\ref{lem_pz},
we have that
$\{u^{*}_n\}$ is also a minimizing sequence.

These observations imply that we can select a sequence $\{u_n\}$ in such a way that, 
for every $n\in\N$, $u_n$ is nonnegative, spherically symmetric 
and decreasing in $r=|x|$.

\noindent
\\ {\it Step 2 - A priori estimates for $u_n$.\ }
We want to obtain bounds uniform in $n$ on
$\|u_n \|_{L^{q}(\R^N)}$, for every $2 \leq q \leq {2N}/{(N-2s)}$,
and on
$\|u_n\|_{H^{s}(\R^N)}$.

We begin with $\|u_n\|_{H^{s}(\R^N)}$.
Clearly, by $(\ref{lim})$,  $[u_n]^2_{H^s(\R^N)}\leq C$ 
for some positive constant $C$ (recall also
Remark \ref{rem_notrivial}).
Therefore,  it remains to prove that $\|u_n\|_{L^{2}(\R^N)}$ is bounded. 
To do this, we set
$$
g_1(t) := |t|^{p-1}t,\qquad g_2(t) := t,\qquad G_1(t):=\frac1{p+1}|t|^{p+1}
\qquad{\mbox{and}}\qquad G_2(t):=\frac12|t|^2.
$$
Then
$$
g(t)=g_1(t) - g_2(t),
$$ 
and so
\begin{equation}
 G(z) \, = \, \int_{0}^{z} g(t) dt 
                       \, = \,\int_{0}^{z} g_1(t)dt - \int_{0}^{z} g_2(t) dt
                       \, = \,G_1 (z) - G_2 \left(z\right), \quad \forall z\geq 0.  
\end{equation}
Since $p<{(N+2s)}/{(N-2s)}$, we have that for every $\epsilon >0$
there exists a positive constant $C_\epsilon$ such that 
$g_1 (t) \leq C_\epsilon |t|^{\frac{N+2s}{N-2s}} + \epsilon g_2 (t)$. 
This implies that $G_1 (z) \leq C_\epsilon |z|^{\frac{2N}{N-2s}} + \epsilon G_2 (z)$. 
Choosing $\epsilon ={1}/{2}$, we get 
\begin{equation}
G_1 (z) \, \leq \, C |z|^{\frac{2N}{N-2s}} + \frac{1}{2} G_2 (z).  \label{G1}
\end{equation}
Now, the condition $\dys \int_{\R^N} G(u_n)\, dx = 1$ can be written in the following form 
\begin{equation}
\int_{\R^N} G_1 (u_n)\, dx \, = \, \int_{\R^N} G_2 (u_n)\, dx +1.    \label{G2}
\end{equation}
Putting together $(\ref{G1})$ and $(\ref{G2})$, we obtain 
\begin{equation}
\frac{1}{2} \int_{\R^N} G_2 (u_n)\, dx  + 1
\, \leq \, C   \int_{\R^N} |u_n|^{\frac{2N}{N-2s}}\, dx.  \label{G3}
\end{equation}
Now we use the fractional Sobolev embedding theorem (see, e.g., \cite[Theorem 6.5]{DPV12}) to say that 
$$
\|u_n \|_{L^{\frac{2N}{N-2s}}(\R^N)}
\,\leq\, C [u_n]_{H^s(\R^N)},
$$
where the constant $C$ does not depend on $n$. Thus, since $u_n$ is a minimizing sequence, the boundedness of $[u_n]^2_{H^s(\R^N)}$ yields that of $\|u_n\|_{L^{\frac{2N}{N-2s}}({\R^N})}$. 
By the definition of $G_2$, the inequality in~$\rif{G3}$ implies that 
\begin{equation*}
\frac{1}{2} \int_{\R^N} u_n^2 \, dx \, = \, \int_{\R^N} G_2 (u_n)\, dx \ \leq \ C, 
\end{equation*}
and thus we bound $\|u_n\|^2_{L^2(\R^N)}$
(and so $\|u_n\|^2_{H^{s}(\R^N)}$) uniformly in $n$.

Finally, 
by the bounds on $\|u_n\|^2_{L^2(\R^N)}$
and $\|u_n \|_{L^{\frac{2N}{N-2s}}(\R^N)}$, using the
H\"{o}lder inequality, we obtain that
$\|u_n \|_{L^{q}(\R^N)}\leq C$ for every $2 \leq q \leq {2N}/{(N-2s)}$.

\noindent
\\ {\it Step 3 - Passage to the limit and conclusion of the proof.\ } Since $u_n \in L^2(\R^N)$ is a sequence of nonnegative radial decreasing functions, 
we can apply Lemma \ref{lem_radial}
to get 
\begin{equation}
|u_n(x)| \leq  \left(\frac{N}{\omega _{N-1}}\right)^{\!\frac{1}{2}} |x|^{-N/2} \|u_n \|_{L^2(\R^N)}. 
\end{equation}
From the previous step we have that $u_n$ is uniformly bounded in $L^2(\R^N)$; 
then $|u_n(x)| \leq C |x|^{-N/2}$, with $C$ independent of $n$. 
This implies that $u_n(x) \rightarrow 0$ as $|x|\rightarrow +\infty$ 
uniformly with respect to $n$. 
Now, since $u_n$ is bounded in $H^{s}(\R^N)$, 
we can extract a subsequence of $u_n$, 
again denoted by $u_n$, 
such that $u_n$ converges weakly in $H^{s}(\R^N)$ and almost everywhere in $\R^N$ 
to a function $\overline{u}$. 
Moreover, by construction, $\overline{u}\in H^{s}(\R^N)$ is spherically symmetric and decreasing in $r$. 

\vspace{1mm} 
Now, in order to apply Lemma \ref{lem_compact} (with $P:=G_1$), 
consider the polynomial function $Q$ defined by
$$
Q(t):= t^2 + |t|^{\frac{2N}{N-2s}}.
$$ 
Since the sequence $u_n$ is uniformly bounded in $L^2(\R^N)$ and in $L^{\frac{2N}{N-2s}}(\R^N)$, 
we have that $Q$ satisfies 
\begin{equation*}
\int_{\R^N} |Q(u_n(x))| \, dx \, = \,  
\int_{\R^N} \left( u_n^2(x)  + |u_n(x) |^{\frac{2N}{N-2s}}\right)  \, dx
 \,  \leq  \, C, \quad 
\mathrm{for\ every\ } n\in\N. \nonumber
\end{equation*} 
Moreover, if $G_1$ is defined as in the previous step, 
by the fact that $p\in\left(1, \frac{N+2s}{N-2s}\right)$ we derive
\begin{equation*}
\frac{G_1(t)}{Q(t)}\rightarrow 0, \quad  \mathrm{as \ } 
t\rightarrow +\infty   \mathrm{\ and\ } t\rightarrow 0.  \nonumber
\end{equation*}
Since $u_n$ converges almost everywhere in $\R^N$ to $\overline{u}$, 
we have that also $G_1\left(u_n\right)$ converges $G_1\left(\overline{u}\right)$. 
Finally, $u_n(x) \rightarrow 0$ as $|x|\rightarrow +\infty$ 
uniformly with respect to $n$. 
Therefore Lemma \ref{lem_compact} holds, getting 
\begin{equation*}
\int_{\R^N} G_1\left(u_n(x) \right) \, dx \, \rightarrow \, 
\int_{\R^N} G_1\left(\overline{u} (x) \right) \, dx \quad \mathrm{as\ } n\rightarrow +\infty. \nonumber
\end{equation*}
Thus, using Fatou's Lemma in $(\ref{G2})$, we obtain that 
\begin{equation}\label{3Z}
\int_{\R^N} G_1\left(\overline{u} (x) \right) \, dx \geq \int_{\R^N} G_2\left(\overline{u} (x) \right) \, dx + 1,  
\end{equation}
that is $$\int_{\R^N} G\left(\overline{u} (x) \right) \, dx \geq 1 .$$
On the other hand, using again Fatou's Lemma, we have that
\begin{equation}\label{3A}\begin{split}
&[\overline{u}]^2_{H^s(\R^N)} \, \leq  \, \lim_{n\rightarrow +\infty} [u_n]^2_{H^s(\R^N)} 
\\ &\qquad= \inf \left\{ [u]^2_{H^s(\R^N)} : u\in H^s(\R^N), \int_{\R^N} G(u)\, dx = 1 \right\}. 
\end{split}\end{equation}
Now, suppose by contradiction that $\dys \int_{\R^N} G(\overline{u} (x)) \, dx > 1$. 
Then, by the scale change $\overline{u}_\sigma (x) = \overline{u}({x}/{\sigma})$, we have 
\begin{equation}\label{3B}
\int_{\R^N} G\left(\overline{u}_\sigma (x) \right) \, dx
\,  =\, \sigma^N 
\int_{\R^N} G\left(\overline{u} (x) \right) \, dx 
\, = \, 1 
\end{equation}
for some \begin{equation}\label{3C}
\sigma\in\left(0, 1\right).\end{equation}
Moreover, we have 
\begin{eqnarray}
[\overline{u}_\sigma]^2_{H^s(\R^N)}\! &=&\! \sigma^{N-2s} [\overline{u}]^2_{H^s(\R^N)} \nonumber\\
&\leq & \! \sigma^{N-2s} \inf \left\{ [u]^2_{H^s(\R^N)} : u\in H^s(\R^N), \int_{\R^N} G(u)\, dx = 1 \right\},
\end{eqnarray} 
due to \eqref{3A}, and 
\begin{equation}
\inf \left\{ [u]^2_{H^s(\R^N)} : u\in H^s(\R^N), \int_{\R^N} G(u)\, dx = 1 \right\}
\,  \leq  \, [\overline{u}_\sigma]^2_{H^s(\R^N)}, \nonumber
\end{equation}
thanks to \eqref{3B}.
Combining the last two inequalities and recalling \eqref{3C}, we get 
$$
\dys
\inf \left\{ [u]^2_{H^s(\R^N)} : u\in H^s(\R^N), \,\int_{R^N} G(u)\, dx = 1 \right\}=0,
$$ 
hence also $[\overline{u}]^2_{H^s(\R^N)}=0$. 
Then $u\equiv0$, which is in contradiction with \eqref{3Z}.
Therefore,
$
\dys \int_{\R^N} G\left(\overline{u} (x) \right) \, dx =1
$
and 
$[\overline{u}]_{H^s(\R^N)}=\inf \big\{ [u]_{H^s(\R^N)} : u\in H^s(\R^N), \int G(u)\, dx = 1 \big\}$;  
that is, $\overline{u}$ solves the minimization problem $(\ref{var})$.
\end{proof}

\vspace{2mm}

\end{document}